\newtheorem{Th}{Theorem}[section]
\newtheorem{Lem}[Th]{Lemma}
\newtheorem{Rem}[Th]{Remark}
\newenvironment{altproof}[1]
{\noindent
{\em Proof of {#1}}.}
{\nopagebreak\mbox{}\hfill $\Box$\par\addvspace{0.5cm}}
    \newcommand{\Rmnum}[1]{\expandafter\@slowromancap\romannumeral #1@}
   \newcommand{\vp}{\varphi}
   \newcommand{\eps}{\varepsilon}
   \def\deg{\mathrm{deg}}
   \def\Z{\mathbb{Z}}
   \def\N{\mathbb{N}}
   \def\R{\mathbb{R}}
   \def\P{\mathcal P} 
   \def\J{\mathcal{J}}
      \def\essup{\mathop{\mathrm{ess\; sup}\,}}
\newcommand{\cC}{{\mathcal C}}
\newcommand{\cN}{{\mathcal N}}
\newcommand{\cP}{{\mathcal P}}
\newcommand{\cT}{{\mathcal T}}
\newcommand{\Ga}{\Gamma}
\newcommand{\weakto}{\rightharpoonup}
\newcommand{\tX}{\widetilde{X}}
\newcommand{\tu}{\widetilde{u}}
\newcommand{\tv}{\widetilde{v}}
\newcommand{\cTto}{\stackrel{\cT}{\longrightarrow}}
\numberwithin{equation}{section}
\begin{document}
\thispagestyle{empty}

\title{Ground states of a system of nonlinear Schr\"odinger equations with periodic potentials}

\author{Jaros\l aw Mederski}
\date{}

\maketitle

\pagestyle{myheadings} \markboth{{\small{ J. Mederski}}}{\small{{Ground states of a system of NLS equations}}}

\begin{abstract} We are concerned with a system of coupled Schr\"odinger equations
$$-\Delta u_i + V_i(x)u_i = \partial_{u_i}F(x,u)\hbox{ on }\R^N,\,i=1,2,...,K,$$
where $F$ and $V_i$ are periodic in $x$ and $0\notin \sigma(-\Delta+V_i)$ for $i=1,2,...,K$, where $\sigma(-\Delta+V_i)$ stands for the spectrum of the Schr\"odinger operator $-\Delta+V_i$.
We impose general assumptions on the nonlinearity $F$ with the subcritical growth and we find a ground state solution being a minimizer of the energy functional associated with the system on a Nehari-Pankov manifold. Our approach is based on a new linking-type result involving the Nehari-Pankov manifold.
\end{abstract}

\vspace{0.2cm}
{\bf MSC 2010:} Primary: 35Q60; Secondary: 35J20, 35Q55, 58E05, 35J47

{\bf Keywords:} photonic crystals, gap soliton, ground state, variational methods, strongly indefinite functional, Nehari-Pankov manifold, Bose-Einstein condensates, Schr\"odinger \-system. 

\section*{Introduction}
\setcounter{section}{1}

We consider the following system of nonlinear Schr\"odinger equations of gradient type
\begin{equation}
\label{eq}
\left\{
\begin{array}{ll}
    -\Delta u_1 + V_1(x)u_1 = f_1(x,u)
    &
     \hbox{ in } \R^N,\\
    -\Delta u_2 + V_2(x)u_2 = f_2(x,u)
    &
     \hbox{ in } \R^N,\\
    ...\\
    -\Delta u_K + V_K(x)u_K = f_K(x,u)
    &
     \hbox{ in } \R^N,\\
\end{array}
\right.
\end{equation}
with $u=(u_1,u_2,...,u_K):\R^N\to\R^K$, which arises in different areas of mathematical physics. In particular, if $V_i$ and $f_i$ are periodic in $x$, then there is a wide range of applications in photonic crystals admitting a spatially periodic structure \cite{Pankov,Kuchment,Akozbek}. Then system \eqref{eq} describes the propagation of gap solitons which are special nontrivial solitary wave solutions $\Phi_j(x,t)=u_j(x)e^{-i\omega_j t}$ of a system of time-dependent Schr\"odinger equations of the form
\begin{equation}\label{eq:SchrodTime}
i\frac{\partial \Phi_j}{\partial t}=
-\Delta \Phi_j+(V_j(x)+\omega_j)\Phi_j-g_j(x,\Phi)\quad\hbox{for }j=1,2,...,K,
\end{equation}
where $g_j$ are responsible for nonlinear polarization in a photonic crystal\cite{NonlinearPhotonicCrystals,Akozbek}.

Another field of applications is condensed matter
physics, where \eqref{eq} comes from the system of coupled
Gross-Pitaevski equations \eqref{eq:SchrodTime} with nonlinearities of the form
$$g_j(x,\Phi)=\Big(\sum_{k=1}^K\beta_{j,k}|\Phi_k|^2\Big)\Phi_j\quad\hbox{for }j=1,2,...,K$$ 
and $V_j$ are the external electric potentials which can be periodic \cite{Malomed,Pankov}. Here $\Phi_j(x,t)=u_j(x)e^{-i\omega_j t}$ stand for Bose-Einstein condensates in $K$ different hyperfine spin states.

A general class of autonomous systems of Schr\"odinger equations has been studied by Brezis and Lieb in \cite{BrezisLiebCMP84} and using a constrained minimization method they have shown the existence of a {\em least energy solution}, i.e. a nontrivial solution with the minimal energy. Their method using rescaling arguments does not apply in our case since \eqref{eq} is non-autonomous and $V_i$, $f_i$ are periodic in $x$. There is also an extensive literature devoted to particular power-like nonlinearities in \eqref{eq}, where potentials $V_i$ are positive and constant, asymptotically constant or sufficiently regular (see e.g. \cite{PengWangARMA2013,LinWeiCMP2005,MaiaJDE2006,Sirakov,TerraciniVerzini,WeiWethARMA2008,MontefuscoPellacciSquassinaJEMS2008,AmbrosettiColorado,IkomaTanaka,BartschDancerWang,LinWeiJDE2006,ChenZouCalPDE2013} and references therein). 

Our aim is to study ground state solutions to \eqref{eq} with a general class of nonlinearities. The energy functional $\J$ associated with \eqref{eq} stated below is strongly indefinite and has no longer mountain pass geometry. We provide a collection of assumptions inspired by recent works of Szulkin and Weth \cite{SzulkinWeth} and Liu \cite{Liu} in case of one equation, i.e. $K=1$ (see also Coti Zelati and Rabinowitz \cite{CotiZelati}, Alama and Li \cite{AlamaLi}).

Throughout the paper except the last Section \ref{sect:DecayOfSolutions}, we assume:
\begin{itemize}
 \item[(V)] For $i=1,2,...,K$, $V_i\in L^{\infty}(\R^N)$ is $\Z^N$-periodic, i.e. $V_i(x+z)=V_i(x)$ for $x\in\R^N$, $z\in\Z^N$,   and $0\notin \bigcup_{i=1}^K\sigma(-\Delta+V_i)$, where $\sigma(-\Delta+V_i)$ denotes the spectrum of $-\Delta+V_i$.
\item[(F1)] $f_i:\R^N\times\R^K\to \R$ is measurable, $\Z^N$-periodic in $x\in\R^N$ and continuous in $u\in\R^K$ for a.e. $x\in\R^N$. Moreover $f=(f_1,f_2,...,f_K)=\partial_u F$, where $F:\R^N\times\R^K\to\R$ is differentiable with respect to the second variable $u\in\R^K$ and $F(x,0)=0$ for a.e. $x\in \R^N$.

\item[(F2)] There are $a>0$ and $2<p<2^*=\frac{2N}{(N-2)_+}$ such that
$$|f(x,u)|\leq a(1+|u|^{p-1})\quad\hbox{for all }u \in\R^K\hbox{ and a.e. }x\in\R^N.$$

\item[(F3)] $f(x,u)=o(u)$ uniformly with respect to $x$ as $|u|\to0$.
\end{itemize}

The energy functional
$\J:H^1(\R^N)^K\to\R$ given by
\begin{eqnarray*}
\J(u)&=&\frac12\sum_{i=1}^K\int_{\R^N}|\nabla u_i|^2+V_i(x)|u_i|^2\,dx- \int_{\R^N} F(x,u)\, dx
\end{eqnarray*}
is of $\cC^1$-class and its critical points correspond to solutions of \eqref{eq}. In view of (V) spectral theory asserts that $\sigma(-\Delta+V_i)$ is purely continuous, bounded from below and consists of closed disjoint intervals \cite{ReedSimon}. Moreover
there is an orthogonal decomposition of $X:=H^1(\R^N)^K=X^+\oplus\tX$, such that the second variation $\J''(0)[u,u]$ is positive definite on $X^+$ and negative definite on $\tX$ (see Section \ref{sect:VariationalSetting} for details). If $0$ lies in a finite spectral gap, i.e. $0\notin\sigma(-\Delta+V_i)$ and a part of the spectrum $\sigma(-\Delta+V_i)$ lies below $0$ for some $i=1,2,...,K$, then $X^+$ and $\tX$ are infinite dimensional and the problem is strongly indefinite.

Our goal is to find a {\em ground state} of $\J$, i.e. a critical point  being a minimizer of $\J$ on the Nehari-Pankov {\em manifold} defined as follows
$$\cN=\{u\in X\setminus \tX|\, \J'(u)(u)=0\hbox{ and } \J'(u)(v)=0\hbox{ for any }v\in \tX\}.$$
Since $\cN$ contains all nontrivial critical points, then a ground state is a least energy solution.  In order to deal with the geometry of $\J$ and to set up $\cN$ we need the following conditions:

\begin{itemize}
\item[(F4)] $f(x,u)u\geq 2 F(x,u)\geq0$ for all $u\in\R^K$ and a.e. $x\in\R^N$.

\item[(F5)] $F(x,u)/|u|^2\to\infty$ uniformly in $x$ as $|u|\to\infty$.

\item[(F6)] If $f(x,u)v=f(x,v)u> 0$, then 
$\ \displaystyle F(x,u) - F(x,v)
 \le \frac{(f(x,u)u)^2-(f(x,u)v)^2}{2f(x,u)u}.$
\end{itemize}

Observe that if $F(x,u)=\Gamma(x)W(|Mu|^2)$, where  $\Ga\in L^\infty(\R^3)$ is $\Z^3$-periodic, positive and bounded away from $0$,
$W\in \cC^1(\R,\R)$, $W(0)=W'(0)=0$, $W'(t)$ is nondecreasing on $(0,+\infty)$ and $M\in GL(K)$ is an invertible $K\times K$ matrix, then assumptions (F1), (F4) and (F6) are satisfied. Indeed, (F1), (F2) are clear, and the assumption $f(x,u)v=f(x,v)u\neq 0$ implies that $F(x,u)=F(x,v)$. The remaining assumptions (F2), (F3) and (F5) can be verified by suitable growth conditions imposed on $W$. For instance, in Kerr photonic crystals one has $W(t)=t^2$. More examples will be provided in Remark \ref{remF6}.

We state our main result.
\begin{Th}\label{ThMain1}
Suppose that (V), (F1)-(F6) are satisfied. Then \eqref{eq} has a ground state, i.e. there is a nontrivial critical point $u$ of $\J$ such that $\J(u)=\inf_{\cN}\J$. Moreover (V), (F1)-(F3) imply that any solution $u$ of \eqref{eq} is continuous and there exist $\alpha, C>0$ such that
$$|u(x)|\leq C \exp(-\alpha |x|)\hbox{ for any }x\in\R^N.$$
\end{Th}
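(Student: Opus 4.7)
My plan is to split the theorem into the existence of a ground state (first statement) and the regularity/decay of any solution (second statement). For the existence, the first step is the spectral splitting $X = X^+\oplus\tX$ in which, after passing to an equivalent Hilbert norm, the quadratic part of $\J$ is positive on $X^+$, negative on $\tX$, and
\[
\J(u)=\tfrac12\|u^+\|^2-\tfrac12\|\tu\|^2-\int_{\R^N}F(x,u)\,dx.
\]
The core reduction is then to prove that for every $u\in X^+\setminus\{0\}$ the half-affine subspace $[0,\infty)u\oplus\tX$ has a \emph{unique} maximum $m(u)$ of $\J$, which automatically lies in $\cN$ and satisfies $\J(m(u))>0$. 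Existence of such a maximum follows from (F4)--(F5): (F4) makes $-F$ dominate the negative quadratic part near zero, and the superquadratic growth (F5) drives $\J\to-\infty$ along large rays. Uniqueness is exactly what (F6) is tailored for: a direct computation shows that (F6) yields a strict convexity-type inequality $\J(m(u))>\J(v)$ for any other $v\in[0,\infty)u\oplus\tX$. The resulting map $m$ restricts to a homeomorphism $m:S^+\to\cN$ of the unit sphere $S^+\subset X^+$ onto $\cN$, and $\Psi:=\J\circ m:S^+\to\R$ satisfies $c:=\inf_{\cN}\J=\inf_{S^+}\Psi>0$, the last inequality from (F2), (F3) and Sobolev embedding.

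Applying Ekeland's variational principle on the Hilbert manifold $S^+$ I would produce a Cerami sequence $(w_n)$ for $\Psi$ at level $c$ and lift it via $u_n:=m(w_n)\in\cN$ to a Cerami sequence for $\J$ at level $c$. Boundedness of $(u_n)$ follows from (F4) and (F5): the identity $\J(u_n)-\tfrac12\J'(u_n)u_n=\int[\tfrac12 f(x,u_n)u_n-F(x,u_n)]\,dx$ is nonnegative, and a standard scaling argument based on (F5) rules out $\|u_n\|\to\infty$. Since $\J$ is not weakly sequentially continuous and we work on $\R^N$, I would then invoke P.L.\ Lions' vanishing/nonvanishing dichotomy applied to $|u_n^+|^2$. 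Vanishing is ruled out: (F2)--(F3) force $\int f(x,u_n)u_n^+\to 0$, which combined with $\J'(u_n)u_n^+=0$ yields $\|u_n^+\|\to 0$, contradicting $u_n\in\cN$. Hence there exist translations $k_n\in\Z^N$ such that $u_n(\cdot+k_n)\rightharpoonup u_0\neq 0$, and $\Z^N$-periodicity of $V_i$ and $F$ makes $u_0$ a critical point of $\J$. Using (F4), Fatou's lemma on $\int F(x,\cdot)$ and weak lower semicontinuity of $\|\cdot^+\|$ yield $\J(u_0)\leq c$, while $u_0\in\cN$ forces $\J(u_0)\geq c$, so $u_0$ is the sought ground state.

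For the regularity and decay part, (F2)--(F3) give $|f(x,u)|\leq\eps|u|+C_\eps|u|^{p-1}$, and a Brezis--Kato bootstrap applied coordinatewise to $-\Delta u_i=-V_iu_i+f_i(x,u)$ upgrades $u\in H^1$ to $u\in L^q(\R^N)^K$ for every $q<\infty$. Standard Calder\'on--Zygmund estimates then give $u\in W^{2,q}_{\mathrm{loc}}$ and, via Sobolev embedding, $u\in\cC(\R^N)^K$ with $u(x)\to 0$ as $|x|\to\infty$. For the exponential bound, the spectral gap $0\notin\sigma(-\Delta+V_i)$ is decisive: outside a sufficiently large ball (F3) allows one to absorb $f_i(x,u)$ into $V_i u_i$, and Kato's inequality yields a distributional differential inequality of the form $-\Delta|u_i|+(V_i-\eps)|u_i|\leq 0$. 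A positive, exponentially decaying supersolution of $-\Delta w+(V_i-\eps)w=0$ outside a large ball is produced via Agmon-type estimates exploiting the gap, and the maximum principle then gives $|u(x)|\leq Ce^{-\alpha|x|}$.

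The principal obstacle is the clean implementation of the novel Nehari--Pankov linking in this strongly indefinite framework: checking that the map $m$ is globally defined, continuous, and that critical points of $\Psi$ lift to critical points of $\J$ require care because $\tX$ is infinite dimensional, so standard Nehari-manifold arguments (for semi-bounded functionals) do not apply directly; (F6) enters precisely to make the reduction to $S^+$ rigorous. The other nontrivial technicality is propagating the Cerami condition through $m$ and recovering compactness by $\Z^N$-translations while keeping the ground-state level invariant.
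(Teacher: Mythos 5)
Your existence argument hinges on the claim that (F6) gives, for every $u\in X^+\setminus\{0\}$, a \emph{unique} maximizer $m(u)$ of $\J$ on $\R^+u\oplus\tX$, so that $\cN\cong S^+$ and you can run Ekeland/minimization for $\Psi=\J\circ m$ on the sphere. This is exactly the step that fails here and it is the reason this paper exists: (F6) is a non-strict inequality, and in the scalar case $K=1$ it is implied by the \emph{weak} monotonicity condition \eqref{eqwMC} (nondecreasing), not the strict one \eqref{eqMC} required by Szulkin--Weth. Under \eqref{eqwMC} the fiber map $t\mapsto\J(tu^++\tv)$ can attain its maximum on a whole segment (take $f(x,u)/|u|$ constant on an interval), so $m$ is not well defined, $\cN$ need not be homeomorphic to a sphere in $X^+$, and since the $f_i$ are not assumed differentiable $\cN$ need not even be a $\cC^1$ manifold; hence neither the sphere reduction nor constrained minimization on $\cN$ can be made rigorous, and your ``strict convexity-type inequality $\J(m(u))>\J(v)$'' is not available -- (F6) only yields the non-strict estimate of Lemma \ref{LemB3check}, namely $\J(u)\geq\J(tu+v)-\J'(u)\bigl(\frac{t^2-1}{2}u+tv\bigr)$. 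The paper replaces your reduction by the linking-type deformation result (Theorem \ref{ThAbstract}) with $\cP=X\setminus\tX$: conditions (A1)--(A4) produce a Cerami sequence at a minimax level $c\geq d>0$, (A5) (which is exactly Lemma \ref{LemB3check}) gives $c\leq\inf_{\cN}\J$, boundedness of the Cerami sequence comes from Lemma \ref{LemCeramiBound1} (again via the inequality of Lemma \ref{LemB3check}, not merely via (F4)--(F5)), and only then does the Lions dichotomy/$\Z^N$-translation step -- which in your write-up is essentially correct and close to the paper's -- produce a nontrivial critical point $u$ with $\J(u)\leq c$, whence $c=\inf_{\cN}\J=\J(u)$. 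So the compactness part of your proposal is salvageable, but the mechanism generating the Cerami sequence and the identification of its level with $\inf_{\cN}\J$ must come from the linking construction, not from a Nehari-type homeomorphism.

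On the regularity and decay: your bootstrap to continuity is plausible, but the exponential bound via Kato's inequality plus a comparison with a positive supersolution of $-\Delta w+(V_i-\eps)w=0$ implicitly needs $-\Delta+V_i-\eps$ to be nonnegative (so that a maximum principle applies) outside a large ball, which is false in the interesting case where $0$ lies in a finite spectral gap and part of $\sigma(-\Delta+V_i)$ is below $0$. The paper instead writes $(-\Delta+V_i+W_i)u_i=0$ with $W_i=-f_i(x,u)/u_i\in L^\infty$, $W_i(x)\to0$ as $|x|\to\infty$ (by (A2)/(F3) and the fact that $u$ is bounded and vanishes at infinity, obtained from a Schr\"odinger-semigroup bootstrap), shows $W_i$ is relatively compact with respect to $-\Delta+V_i$ so that the essential spectrum is unchanged, concludes that $0$ is an isolated eigenvalue of finite multiplicity of $-\Delta+V_i+W_i$, and quotes Simon's decay theorem for such eigenfunctions (Theorem \ref{ThExp}). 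If you want to keep an Agmon-type route you must state precisely which estimate covers eigenvalues in a gap; the supersolution argument as written does not.
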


Observe that condition (F6) can be regarded as a vector version of the following weak monotonicity condition for $K=1$ 
\begin{equation}\label{eqwMC}
u\mapsto f(x,u)/|u|\hbox{ is nondecreasing on }(-\infty,0)\hbox{ and on }
(0,\infty),
\end{equation}
namely \eqref{eqwMC} implies (F6) in dimension $K=1$.
In order to apply a Nehari-Pankov manifold technique, Szulkin and Weth \cite{SzulkinWeth} required the strong  monotonicity condition:
\begin{equation}\label{eqMC}
u\mapsto f(x,u)/|u|\hbox{ is strictly increasing on }(-\infty,0)\hbox{ and on }
(0,\infty),
\end{equation}
and they obtained the existence of a ground state of the Schr\"odinger equation, i.e. \eqref{eq} with $K=1$, under slightly more restrictive assumptions than (V) and (F1), and assuming additionally (F2), (F3), (F5) and \eqref{eqMC}. Observe that if $K=1$ then (F4) follows from (F3) and \eqref{eqwMC}, so that assumptions (F1)-(F6) are more general than those in \cite{SzulkinWeth}. Hence Theorem \ref{ThMain1}, in case $K=1$, contains \cite{SzulkinWeth}[Theorem 1.1] as well as \cite{Liu}[Theorem 1.1].

If we replace \eqref{eqMC} by the weak monotonicity condition \eqref{eqwMC}, then the methods of \cite{SzulkinWeth} fail. 
Moreover the recently generalized Nehari-Pankov manifold technique due to Bartsch and the author \cite{BartschMederski1}[Section 4] is still insufficient to treat our system. Indeed, if $K=1$, then the weak monotonicity condition \eqref{eqwMC} does not imply that $\J$ has  the unique global maximum in $\R^+u\oplus\tX$ for any $u\in X^+\setminus\{0\}$, hence $\cN$ may not be homeomorphic with a sphere in $X^+$. Therefore we are not able to minimize $\J$ on a sphere to get critical points. Moreover we do not assume that $f_i$ are differentiable, thus $\cN$ need not to be of class $\cC^1$ and the standard minimizing methods on $\cN$ do not apply. Hence, $\cN$ is just a subset of $X$, however we will use the term {\em manifold}.
Our approach is based on a deformation argument and we obtain a new linking-type result involving the Nehari-Pankov manifold in Theorem \ref{ThAbstract}.

In the paper we provide also two different minimax characterizations of the ground state level, i.e.
\begin{equation}\label{eq:minmax}
\inf_\cN\J=\inf_{u\in X^+\setminus\{0\}}\sup_{t\geq 0,\,\tu\in\tX}\J(tu^++\tu)
=\inf_{u\in X\setminus \tX}\inf_{h\in\Gamma(u)} \sup_{u'\in M(u)} \J(h(u',1)), 
\end{equation}
where $\Gamma(u)$ consists of admissible homotopies on $M(u)\times [0,1]$ defined in Section \ref{sect:CriticalPoitTheory}. Note that \eqref{eq:minmax} is important from the numerical point of view and gives rise to compute the ground state level and ground state solutions (see Li and Zhou \cite{LiZhouNumer2001}).

The paper is organized as follows. In the next section we present a critical point theory for a class of functionals like $\J$ and we provide a linking-type result involving the Nehari-Pankov manifold. In Section \ref{sect:VariationalSetting}
we formulate our problem in the variational setting and prove Theorem \ref{ThMain1}. Finally we study the continuity and the exponential decay of solutions to \eqref{eq} in a general setting and, due to possible applications to a wider range of systems, we place it in a self-contained Section \ref{sect:DecayOfSolutions}.

\section{Critical point theory - ground states via linking}
\label{sect:CriticalPoitTheory}

Let $X = X^+\oplus \tX$ be a Hilbert space with $X^+$ orthogonal to $\tX$, and $\tX$ is separable.  
For $u \in X$ we denote by $u^+ \in X^+$ and $\tu  \in \tX$ the corresponding summands so that $u = u^++\tu$.
In addition to the norm topology $\|\cdot\|$ we need the topology $\cT$ on $X$ which is induced by the norm
$$\|u\|_\cT:=\max\Big\{\|u^+\|, \sum_{k=1}^\infty\frac{1}{2^{k+1}}|\langle \tu,e_k\rangle|\Big\},$$
where $(e_k)$ stands for a total orthonormal sequence in $\tX$. Recall that \cite{KryszSzulkin,Willem,BartschDing,LiSzulkin,WillemZou} 
$$\|\tu\|\leq\|u\|_\cT\leq \|u\|\hbox{ for }u\in X$$
and on bounded subsets of $X$ the topology $\cT$ coincides with
the
product of the norm topology in $X^+$ and the weak topology in $\tX$. The convergence of a sequence in $\cT$ topology will be denoted by $u_n\cTto u$.

We consider a functional $\J\in\cC^1(X,\R)$ 
such that the following conditions hold:
\begin{itemize}
\item[(A1)] $\J$ is $\cT$-upper semicontinuous, i.e. $\J_t:=\J^{-1}([t,\infty))$ is $\cT$-closed for any $t\in\R$.
\item[(A2)] $\J'$ is $\cT$-to-weak$^*$ continuous in $\J_0$, i.e. $\J'(u_n)\weakto\J'(u_0)$ if $u_n\cTto u_0$ and $(u_n)_{n\geq 0}\subset \J_0$.
\end{itemize}

Let $\P\subset X\setminus\tX$ and $\cP\neq\emptyset$. The linking geometry of $\J$ is described by the following assumptions.

\begin{itemize}
\item[(A3)] There exists $r>0$ such that $d:=\inf\limits_{u\in X^+,\|u\|=r} \J(u)>0$.
\item[(A4)] For every $u\in \cP$ there exists $R(u)>r$ such that
$$\sup_{\partial M(u)}\J\leq \J(0)=0,$$
where $M(u):=\{t u+v \in X |\;t\geq0,\; v \in \tX\hbox{ and }
\|tu+v\|\leq R(u)\}$.
\end{itemize}

In the usual linking geometry one assumes (A3) and (A4) when $\cP$ is a singleton \cite{BartschDing,KryszSzulkin,WillemZou,LiSzulkin,BenciRabinowitz}. In our case $\cP$ joins the linking geometry with the following set
\begin{equation}\label{DefOfN}
\cN := \{u\in \cP|\; \J'(u)(u)=0\hbox{ and }\J'(u)(v)=0\hbox{ for any }v\in \tX\}.
\end{equation}
If $\cP=X\setminus\tX$, then $\cN$ has been introduced by Pankov in \cite{Pankov} (see also \cite{SzulkinWeth,BartschMederski1}).
The following condition is considered as well.
\begin{itemize}
\item[(A5)] If $u\in \cN$ then $\J(u)\geq \J(tu+v)$ for $t\geq 0$ and $v\in \tX$.
\end{itemize}

For any $A\subset X$, $I\subset [0,+\infty)$ such that $0\in I$, and $h:A\times I\to X$ we collect the following assumptions.
\begin{itemize}
\item[(h1)] $h$ is $\cT$ -continuous (with respect to norm $\|\cdot\|_\cT$);
\item[(h2)] $h(u,0)=u$ for all $u\in A$;
\item[(h3)] $\J(u)\geq \J(h(u,t))$ for all $(u,t)\in A\times I$;
\item[(h4)] each $(u,t)\in A\times I$ has an open neighborhood $W$ in the product topology of $(X,\cT)$ and $I$ such that the set $\{v-h(v,s):(v,s)\in W\cap(A\times I)\}$ is contained in a finite-dimensional subspace of $X$. 
\end{itemize}

\begin{Th}\label{ThAbstract}
Suppose that $\J\in\cC^1(X,\R)$ satisfies (A1)-(A4). Then  there exists a Cerami sequence $(u_n)$ at level $c$, i.e.
$\J(u_n)\to c$ and $(1+\|u_n\|)\J'(u_n)\to 0$, where
\begin{eqnarray*}
c&:=&\inf_{u\in \cP}\inf_{h\in\Gamma(u)} \sup_{u'\in M(u)} \J(h(u',1))\geq d>0,\\
\Gamma(u)&:=&\{h\in \cC(M(u)\times [0,1])|\;h \hbox{ satisfies } (h1)-(h4)\}.
\end{eqnarray*}
Suppose that in addition (A5) holds. Then 
$c\leq \inf_\cN\J$, and if $c\geq\J(u)$ for some critical point $u\in\cP$, then
$$c=\inf_\cN\J=\J(u).$$
\end{Th}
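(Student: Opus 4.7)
The plan is to adapt the classical linking approach to the $\cT$-topology framework needed for strongly indefinite functionals, and to decouple the argument into a topological intersection part, a deformation part, and a comparison part.

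First I would prove the lower bound $c\ge d$. The key is a topological intersection lemma: for every $u\in\cP$ and every $h\in\Gamma(u)$, the set $h(M(u)\times\{1\})$ meets the sphere $S^+_r:=\{v\in X^+:\|v\|=r\}$. Combined with (A3) and (h3), this forces $\sup_{u'\in M(u)}\J(h(u',1))\ge d$. To establish the intersection I would use (h4) to reduce to a finite-dimensional subspace $E\supset X^+\cap\span(u)$ containing all displacements $v-h(v,s)$ on a compact neighborhood, then apply Brouwer degree: (h3) together with (A4) yields $\J(h(u',1))\le\J(u')\le 0$ on $\partial M(u)$, so the boundary avoids $S^+_r$, and the linking between $M(u)\cap E$ and $S^+_r\cap E$ (a standard disk/sphere linking pair in a half-space) produces a nonzero degree, forcing the required intersection.

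Second, I would construct the Cerami sequence at level $c$ by a contradiction/deformation argument. Assuming no Cerami sequence exists at level $c$, there are $\alpha,\eps>0$ with $(1+\|u\|)\|\J'(u)\|\ge\alpha$ on $\J^{-1}([c-\eps,c+\eps])$. Using (A1) and (A2), one constructs, via a $\cT$-locally finite partition of unity, a $\cT$-continuous, locally Lipschitz pseudo-gradient vector field whose values at each point lie in a finite-dimensional subspace; its flow $\eta$ then inherits (h1)--(h4), in particular (h4) because each trajectory remains in the finite-dimensional span produced by the partition. Picking $h\in\Gamma(u)$ with $\sup_{M(u)}\J(h(\cdot,1))<c+\eps/2$ and concatenating with $\eta$ via a rescaled time parameter yields $h^\ast\in\Gamma(u)$ with $\sup_{M(u)}\J(h^\ast(\cdot,1))<c$, contradicting the definition of $c$. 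Verifying that the concatenation still satisfies (h4) is the point that requires care: the finite-dimensional subspace must be enlarged to accommodate both $h$'s displacements and $\eta$'s, which is possible because (h4) is a local condition.

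Third, under (A5) I would obtain $c\le \inf_\cN\J$ by testing the infimum on the identity homotopy. For $u\in\cN\subset\cP$, the constant-in-time map $h(u',t):=u'$ manifestly satisfies (h1)--(h4), and (A5) gives $\sup_{u'\in M(u)}\J(h(u',1))=\sup_{u'\in M(u)}\J(u')=\J(u)$, so $c\le\J(u)$. Finally, if a critical point $u\in\cP$ satisfies $\J(u)\le c$, then $\J'(u)=0$ places $u\in\cN$ by the definition \eqref{DefOfN}, and the chain $\inf_\cN\J\le\J(u)\le c\le\inf_\cN\J$ collapses, yielding the stated equalities.

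The main obstacle will be the second step: producing a deformation that is simultaneously $\cT$-continuous, descends $\J$, and preserves the local finite-dimensionality demanded by (h4) even though $\tX$ is infinite dimensional. This is exactly the point where the framework departs from the classical Cerami deformation, and it forces the use of $\cT$-locally finite partitions of unity (as in Kryszewski--Szulkin and Bartsch--Ding) together with a careful bookkeeping to ensure that the concatenation of an existing $h\in\Gamma(u)$ with the pseudo-gradient flow still lives in a fixed finite-dimensional subspace locally.
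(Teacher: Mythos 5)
Your proposal is correct and follows essentially the same route as the paper: a degree/intersection argument (the paper uses the Kryszewski--Szulkin admissible-map degree, which is exactly the finite-dimensional reduction plus Brouwer degree you describe) to get $c\geq d$, a $\cT$-continuous locally finite-dimensional pseudo-gradient flow and concatenation with a near-optimal $h\in\Gamma(u)$ to produce the Cerami sequence, and the identity homotopy together with (A5) and the observation that a critical point of $\cP$ lies in $\cN$ for the final equalities. No substantive differences or gaps.
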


\begin{proof} Proof follows from the following Steps 1-4. In Steps 1-3 we employ similar arguments to those given in \cite{KryszSzulkin,LiSzulkin,WillemZou,Willem,BartschDing}. Therefore we omit some details and provide the appropriate references. 
For any $s<t$ we denote  $\J_s^t:=\J^{-1}([s,t])$ and $\J^t:=\J^{-1}((-\infty,t])$.\\
{\em Step 1.} We show that $c\geq d$.\\
Take any $u\in \cP$ and $h\in\Gamma(u)$. Observe that the map $H:M(u)\times[0,1]\to \R u^+\oplus \tX$ given by the formula
$$H(v,t):=(\|h(v,t)^+\|-r)\frac{u^+}{\|u^+\|}+\widetilde{h}(v,t)$$
is admissible, i.e. (h1) and (h4) hold. Moreover $H(v,t)=0$ if and only if $h(v,t)\in X^+$ and $\|h(v,t)\|=r$. Since $0\notin H(\partial M(u)\times [0,1])$ then by the homotopy invariance and the existence property of the degree provided in \cite{KryszSzulkin} we get
$$\deg (H(\cdot,1),M(u))=\deg (H(\cdot,0),M(u))=1\neq 0.$$
Therefore $H(v,1)=0$ for some $v\in M(u)$, thus $h(v,t)\in X^+$ and $\|h(v,t)\|=r$. Observe that
$$\sup_{u'\in M(u)} \J(h(u',1))\geq \J(h(v,1))\geq d.$$
Hence $c\geq d$.\\
\noindent {\em Step 2.} Suppose that there is no Cerami sequence at level $c$, i.e. there exists $\eps>0$ such that $(1+\|u\|)\|\J'(u)\|\geq \eps$ for any $u\in \J_{c-\eps}^{c+\eps}$. Then there is a continuous and $\cT$-continuous $\eta:\J^{c+\eps}\times [0,\infty)\to X$ such that $\eta|_{\J^{c+\eps}\times[0,2\eps]}$ satisfies (h1)-(h4) and 
\begin{equation}\label{eq:Link3}
\eta(\J^{c+\eps}\times\{2\eps\})\subset \J^{c-\eps}. 
\end{equation}
Indeed, as in proof in \cite{LiSzulkin}[Theorem 2.1] and \cite{Willem}[Lemma 6.7] there is a $\cT$-locally-Lipschitz and locally Lipschitz vector field $V:N\to X$ such that $N$ is a $\cT$-open neighborhood of $\J^{c+\eps}$ and any $u\in N$ has $\cT$-open neighborhood $N_u$ such that $V(N_u)$ is contained in a finite-dimensional subspace of $X$. Moreover for some constant $C>0$
\begin{eqnarray}\label{eq:Link1}
\|V(u)\|\leq C(1+\|u\|)&& \hbox{ for }u\in N,\nonumber\\
\langle \J'(u),V(u)\rangle \geq 0 && \hbox{ for }u\in \J^{c+\eps},\\
\langle \J'(u),V(u)\rangle >1 && \hbox{ for }u\in \J_{c-\eps}^{c+\eps}.
\label{eq:Link2}
\end{eqnarray}
Hence there is a unique solution $\eta:\J^{c+\eps}\times [0,\infty)\to X$ of the initial value problem
$$\partial_t\eta(u,t)=-V(\eta(u,t)),\quad\eta(u,0)=u\in\J^{c+\eps}.$$ Observe that \eqref{eq:Link1} implies (h3) and \eqref{eq:Link2} implies that
$\eta(\J^{c+\eps}\times \{2\eps\})\subset \J^{c-\eps}$. Moreover $\eta|_{\J^{c+\eps}\times[0,2\eps]}$ satisfies (h1) and (h4) as in \cite{LiSzulkin,WillemZou,KryszSzulkin}.\\
\noindent {\em Step 3.} Take any $u\in \P$ and $h\in\Gamma(u)$ such that $\sup_{u'\in M(u)}\J(h(u',1))<c+\eps$. Observe that $g:M(u)\times [0,1]\to X$ given by
\begin{equation*}
g(u',t):=\left\{
  \begin{array}{lcl}
    h(u',2t)  &
    t\in [0,1/2],\\
    \eta(h(u',1),2\eps(2t-1))
    &
    t\in [1/2,1]
  \end{array}
\right.
\end{equation*}
satisfies (h1)-(h4) and $g\in\Gamma(u)$. In view of \eqref{eq:Link3} we get  $$\sup_{u'\in M(u)}\J(g(u',1))\leq c-\eps$$
which contradicts the definition of $c$.\\
\noindent {\em Step 4.} Suppose that (A5) holds.
If $\cN=\emptyset$, then $\inf_\cN\J=\infty$. Let $\cN\neq\emptyset$, take any $u\in \cN$ and observe that $h:M(u)\times [0,1]\to X$ such that $h(u',t)=u'$ for $u'\in M(u)$, satisfies (h1)-(h4). From (A5) we get
$$c\leq \J(h(u',t))=\J(u')\leq \J(u).$$
Therefore $c\leq \inf_\cN\J$. Moreover, if $c\geq\J(u)$ and $u\in\cP$ is a critical point, then $u\in\cN$ and $c=\inf_\cN\J=\J(u)$.
\end{proof}

Observe that if $\tX=\{0\}$, then (A1) and (A2) are trivially satisfied for $\J$ of class $\cC^1$. Moreover (A3) and (A4) coincide with the classical assumptions of the mountain pass geometry provided that $\cP$ is a singleton \cite{Willem,Struwe}. If $\cP$ contains the classical Nehari manifold  
\[
\cN := \{u\in X\setminus \{0\}|\; \J'(u)(u)=0\},
\]
then Theorem \ref{ThAbstract} becomes a variant of the Mountain Pass Theorem and the mountain pass level $c$ coincides with the ground state level provided that $c$ is a critical value (cf. \cite{Willem}[Theorem 4.2]). As opposed to the usual Nehari (or Nehari-Pankov) manifold approaches (e.g. \cite{Willem,SzulkinWeth,BartschMederski1,Pankov}), in (A5) we do not require that $\cN$ consists of the unique maximum points of $\J$ on $\R^+u\oplus\tX$ for $u\in X^+\setminus\{0\}$. Therefore we are able to consider a wider range of nonlinearities.

\section{Variational setting}
\label{sect:VariationalSetting}

Since $0\notin\sigma(-\Delta+V_i)$, then the spectral theory asserts that we may find continuous projections $P_i^+$ and $P_i^-$ of $H^1(\R^N)$ onto $X_i^+$ and $X_i^-$ respectively such that $H^1(\R^N)=X^+_i\oplus X^-_i$ for $i=1,2...,K$, see \cite{ReedSimon}. Moreover we introduce new inner products in $H^1(\R^N)$ by the following formulas
\begin{eqnarray*}
\langle u,v\rangle_i &:=&\int_{\R^N}\langle \nabla P_i^+(u),\nabla P_i^+(v)\rangle+V_i(x)\langle P_i^+(u),P_i^+(v)\rangle\,dx\\
&&-
\int_{\R^N}\langle \nabla P_i^-(u),\nabla P_i^-(v)\rangle+V_i(x)\langle P_i^-(u),P_i^-(v)\rangle\,dx
\end{eqnarray*}
and norms given by
$$\|u\|_i:=(\langle u,u\rangle_i)^{1/2}$$
for $i=1,2,...,K$.
Let
\begin{eqnarray*}
X^+&:=&X_1^+\times X_2^+\times ...\times X_K^+\\
\tX&:=&X_1^-\times X_2^-\times ...\times X_K^- 
\end{eqnarray*}
and observe that  any $u\in X:=H^1(\R^N)^K$ admits a unique decomposition $u=u^++\tu$, where
$u^+=(P^+_1(u_1),P^+_2(u_2),...,P^+_K(u_K))\in X^+$ and $\tu=(P^-_i(u_i),P^-_2(u_2),...,P^-_K(u_K))\in \tX$. We introduce a new norm in $X$ given by
\begin{equation*}
\|u\|^2= \sum_{i=1}^K(\|P^+_i(u_i)\|^2_i+\|P^-_i(u_i)\|^2_i)=
\sum_{i=1}^K\|u_i\|^2_i.
\end{equation*}
Then
\begin{eqnarray*}
\J(u)&=&\frac12\sum_{i=1}^K\int_{\R^N}|\nabla u_i|^2+V_i(x)|u_i|^2\,dx- \int_{\R^N} F(x,u)\, dx\\
&=& \frac12\sum_{i=1}^K(\|P^+_i(u_i)\|^2_i-\|P^-_i(u_i)\|^2_i)- \int_{\R^N} F(x,u)\, dx
= \frac12\sum_{i=1}^K\|P^+_i(u_i)\|^2_i-I(u)\\
&=& \frac12\|u^+\|^2-I(u),
\end{eqnarray*}
where
\begin{eqnarray*}
I(u)&:=&\frac12\sum_{i=1}^K\|P^-_i(u_i)\|^2_i+ \int_{\R^N} F(x,u)\,dx\\
&=&\frac12\|\tu\|^2+ \int_{\R^N} F(x,u)\,dx.
\end{eqnarray*}
Observe that for every $\eps>0$ there is $C_{\eps}>0$ such that
\begin{equation}\label{eq:estimationOf_f}
|F(x,u)|\leq \eps|u|^2+C_{\eps}|u|^{p}\quad\hbox{for }u\in\R^K,
\end{equation}
and $\J,I\in\cC^1(X,\R)$. Let
\begin{equation}\label{DefOfP}
\cP=\{u\in X\setminus\tX|\,I'(u)(u)> 0\}
\end{equation}
and note that
$$\cN=\{u\in X\setminus \tX|\, \J'(u)(u)=0\hbox{ and } \J'(u)(v)=0\hbox{ for any }v\in \tX\}\subset \P.$$

Suppose that (F5) additionally holds. Then similarly as in \cite{KryszSzulkin,Willem,LiSzulkin,WillemZou} 
we check that conditions (A1)-(A3) are satisfied. Moreover $\J$ has the linking geometry.

\begin{Lem}
Condition (A4) is satisfied provided that (F5) holds.
\end{Lem}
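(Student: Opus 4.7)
The plan is to produce, for each $u \in \cP$, a radius $R(u) > r$ such that $\J \leq 0$ on the two pieces of $\partial M(u)$: the bottom face $\{v \in \tX : \|v\| \leq R(u)\}$ and the spherical cap $\{tu+v : t \geq 0,\; v \in \tX,\; \|tu+v\| = R(u)\}$. The bottom face costs nothing: for $v \in \tX$ one has $v^+ = 0$ and hence
$$
\J(v) \;=\; -\tfrac12\|v\|^2 - \int_{\R^N} F(x,v)\,dx \;\leq\; 0,
$$
since $F \geq 0$ by the standing assumption (F4); this is independent of $R(u)$.

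For the spherical cap I would argue by contradiction: assume there is a sequence $w_n = t_n u + v_n$ with $t_n \geq 0$, $v_n \in \tX$, $\|w_n\| \to \infty$, and $\J(w_n) \geq 0$. Normalize $\bar w_n := w_n/\|w_n\| = \bar t_n u + \bar v_n$ with $\|\bar w_n\| = 1$, and pass to a subsequence along which $\bar t_n \to \bar t \geq 0$ and $\bar v_n \rightharpoonup \bar v$ in $\tX$; then $\bar w_n \rightharpoonup \bar w := \bar t u + \bar v$ in $X$, and local Rellich compactness gives $\bar w_n \to \bar w$ a.e.\ on $\R^N$. Dividing $\J(w_n) \geq 0$ by $\|w_n\|^2$ yields
$$
\tfrac12 \|\bar w_n^+\|^2 \;\geq\; \tfrac12 \|\widetilde{\bar w}_n\|^2 + \int_{\R^N}\frac{F(x, w_n)}{\|w_n\|^2}\,dx.
$$
If $\bar t = 0$ then $\bar w_n^+ = \bar t_n u^+ \to 0$, which forces $\|\widetilde{\bar w}_n\| \to 0$ and contradicts $\|\bar w_n\| = 1$. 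If $\bar t > 0$ then $\bar w^+ = \bar t u^+ \neq 0$ (because $u \in \cP \subset X \setminus \tX$), so $\bar w \neq 0$ on some set $\Omega \subset \R^N$ of positive measure; on $\Omega$, $|w_n(x)| = \|w_n\|\,|\bar w_n(x)| \to \infty$, so (F5) combined with Fatou's lemma drives $\int F(x,w_n)/\|w_n\|^2 \to \infty$, contradicting $\|\bar w_n^+\|^2 \leq 1$.

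The step I expect to be the main obstacle is the case $\bar t > 0$: one has to promote the Hilbert-space assertion $\bar w^+ \neq 0$ to the pointwise assertion $|w_n(x)| \to \infty$ on a positive-measure set, which is where a.e.\ convergence, the injection $H^1 \hookrightarrow L^2_{\mathrm{loc}}$, and Fatou's lemma must be combined with the superquadratic growth supplied by (F5). Once this contradiction is in hand, any sufficiently large $R(u) > r$ makes $\J \leq 0$ on the cap, completing the verification of (A4).
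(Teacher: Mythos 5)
Your proof is correct and follows essentially the same route as the paper: after normalizing a hypothetical sequence $t_nu+v_n$ with norm tending to infinity and $\J\geq 0$, you rule out vanishing of the $X^+$-component of the limit (the paper does this via the quantitative bound $\|w_n^+\|^2>\tfrac12$ coming from $F\geq 0$, you via the case split on $\bar t$), and conclude by a.e.\ convergence, (F5) and Fatou's lemma. The only cosmetic differences are your separate (correct) treatment of the bottom face $t=0$ and the unstated but immediate boundedness of $\bar t_n$ and $\bar v_n$ needed before extracting the subsequence.
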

\begin{proof}
Suppose that $u\in\cP$ and there are $t_n>0$ and $v_n\in\tX$ such that $\J(t_nu+v_n)>0$ and $\|t_nu+v_n\|\to\infty$ as $n\to\infty$. 
Let $w_n=\frac{t_nu+v_n}{\|t_nu+v_n\|}$ and we may assume that $w_n\weakto w$ in $X$ and $w_n(x)\to w(x)$ a.e. in $\R^N$ for some $w\in X$. Since 
$$0<\J(t_nu+v_n)\leq \frac12\|t_nu^+\|^2-\frac12\|t_n\tu+v\|^2$$
then
$$\frac{1}{2}< \frac{t_n^2}{\|t_nu+v_n\|^2}\|u^+\|^2=\|w_n^+\|^2\leq \|w_n\|^2=1$$
and we may assume that $w^+\neq 0$. Hence $w\neq 0$ and $|t_nu(x)+v_n(x)|=|w_n(x)|\|t_nu+v_n\|\to\infty$ as $n\to\infty$ and $w(x)\neq 0$. Then by Fatou's lemma
\begin{eqnarray*}
0<\frac{\J(t_nu+v_n)}{\|t_nu+v_n\|^2}\leq  \frac{1}{2}-\int_{\R^N}\frac{F(x,t_nu+v_n)}{|t_nu+v_n|^2}|w_n|^2\,dx\to-\infty \quad\hbox{ as }t\to\infty
\end{eqnarray*}
and we obtain a contradiction.
\end{proof}

\begin{Lem}\label{LemB3check}
If (F4)-(F6) hold, $u\in X$, $v\in\tX$ and $t\geq 0$ then
\begin{equation}\label{eq:LemB3check}
\J(u)\geq \J(tu+v) -\J'(u)\left(\frac{t^2-1}{2}u+tv\right).
\end{equation}
In particular, condition (A5) holds.
\end{Lem}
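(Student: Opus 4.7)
My strategy is to reduce the inequality to a pointwise statement on $F$ and then verify it using hypotheses (F4) and (F6). First I would use the decomposition $\J(z)=\tfrac12\|z^+\|^2-\tfrac12\|\widetilde z\|^2-\int F(x,z)\,dx$ together with $(tu+v)^+=tu^+$, $\widetilde{tu+v}=t\tu+v$ and the expansion of $\|t\tu+v\|^2$ to expand
\[
\J(u)-\J(tu+v)+\J'(u)\!\left(\tfrac{t^2-1}{2}u+tv\right);
\]
the quadratic contributions involving $u^+$, $\tu$ and $\langle\tu,v\rangle$ should cancel after collecting terms, leaving the identity
\[
\J(u)-\J(tu+v)+\J'(u)\!\left(\tfrac{t^2-1}{2}u+tv\right)=\tfrac12\|v\|^2+\int_{\R^N}\Theta(x)\,dx,
\]
where $\Theta(x):=F(x,tu+v)-F(x,u)-f(x,u)\cdot\bigl(\tfrac{t^2-1}{2}u+tv\bigr)$. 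Since $\|v\|^2\ge 0$, it then suffices to prove $\Theta(x)\ge 0$ pointwise.

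For the pointwise step I fix $x$, write $u,v\in\R^K$ for the values $u(x),v(x)$, and set $a:=f(x,u)u$, $w:=tu+v$, $b:=f(x,u)w$. The substitution $v=w-tu$ rewrites $\Theta=F(x,w)-F(x,u)+\tfrac{t^2+1}{2}a-tb$, and when $a>0$ completing the square in $t$ gives $\Theta=F(x,w)-F(x,u)+\tfrac{a}{2}(t-b/a)^2+\tfrac{a^2-b^2}{2a}$. If $b\le 0$ the minimum of the square term over $t\ge 0$ is attained at $t=0$, so $\Theta\ge F(x,w)-F(x,u)+\tfrac{a}{2}\ge 0$ by (F4) (using $F(x,w)\ge 0$ and $2F(x,u)\le a$); the degenerate case $a=0$ collapses similarly, since (F4) forces $F(x,u)=0$. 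The delicate case $a,b>0$ I would handle by analysing the critical points of $\Theta$ viewed as a function of $(t,v)\in[0,\infty)\times\R^K$: the first-order conditions $\nabla_v\Theta=f(x,w)-tf(x,u)=0$ and $\partial_t\Theta=-f(x,u)v=0$ jointly imply $f(x,w)u=f(x,u)w=ta>0$, so (F6) applies at any such critical point and yields $F(x,u)-F(x,w)\le\tfrac{a(1-t^2)}{2}$; substituting back gives $\Theta\ge 0$ there. The boundary $t=0$ is covered by (F4), and coercivity (from $F\ge 0$ plus superquadratic growth at infinity) excludes escape of minima to $\infty$.

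The ``in particular'' statement then follows at once: for $u\in\cN$ the test vector $\phi:=\tfrac{t^2-1}{2}u+tv$ satisfies $\phi^+=\tfrac{t^2-1}{2}u^+$ and $\widetilde\phi=\tfrac{t^2-1}{2}\tu+tv\in\tX$, so $\J'(u)\phi=\tfrac{t^2-1}{2}\J'(u)(u^+)+\J'(u)(\widetilde\phi)=0$ by the definition of $\cN$ (noting also $\J'(u)(u^+)=\J'(u)(u)-\J'(u)(\tu)=0$), and \eqref{eq:LemB3check} collapses to $\J(u)\ge\J(tu+v)$, which is (A5). I expect the main obstacle to be the case $b>0$ in the pointwise step: (F6) supplies the required bound only under the extra condition $f(x,u)w=f(x,w)u$, which is not automatic from $b>0$ alone. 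The optimisation argument resolves this because at any candidate minimum of $\Theta$ the Euler--Lagrange conditions \emph{force} precisely that side condition, so (F6) can be invoked exactly where it is needed.
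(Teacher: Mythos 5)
Your proof is correct and starts from the same reduction as the paper: the identical expansion gives $\J(u)-\J(tu+v)+\J'(u)\bigl(\tfrac{t^2-1}{2}u+tv\bigr)=\tfrac12\|v\|^2+\int_{\R^N}\Theta\,dx$ (your $\Theta$ is $-\vp$ in the paper's notation), and in both arguments the decisive mechanism is that a first-order condition at an extremum produces exactly the side condition $f(x,u)w=f(x,w)u$ needed to invoke (F6). Where you genuinely differ is the optimization step. The paper fixes $v$ and maximizes $\vp(\cdot,x)$ over $t\ge0$ alone, using (F4) at $t=0$ and (F5) for the behaviour as $t\to\infty$; at an interior maximizer $t_0$ only the scalar equation $f(x,u)(t_0u+v)=f(x,t_0u+v)u$ is available, so one must then distinguish whether this common value is $\le 0$ (handled by (F4)) or $>0$ (handled by (F6)). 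You minimize $\Theta$ jointly in $(t,v)\in[0,\infty)\times\R^K$; this costs a joint coercivity estimate (which does follow from (F5) and Young's inequality once $a=f(x,u)u>0$), but buys that at an interior critical point the two conditions $f(x,w)=tf(x,u)$ and $f(x,u)v=0$ force the common value to equal $ta>0$, so the sign dichotomy disappears and (F4) is needed only on the boundary $t=0$.

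Two small repairs. First, $\partial_t\Theta=-f(x,u)v$ is not the general formula (it is $f(x,w)u-ta-f(x,u)v$); it holds after substituting $\nabla_v\Theta=0$, which is how you use it, so state this. Second, your case split ($b\le0$ versus $a,b>0$) leaves the corner $a=0$, $b>0$ open, and ``(F4) forces $F(x,u)=0$'' does not by itself close it, since for $K\ge2$ the scalar relation $f(x,u)u=0$ does not formally yield the vector relation $f(x,u)=0$. The corner is in fact vacuous: by (F4) we have $F(x,\cdot)\ge0$ and $F(x,u)=0$, so $u$ is a global minimizer of the differentiable function $F(x,\cdot)$, hence $f(x,u)=0$, whence $b=0$ and $\Theta=F(x,w)\ge0$ (this also disposes of points where $u(x)=0$). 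With these touches your pointwise argument is complete, and your derivation of (A5) from \eqref{eq:LemB3check} is the same as the paper's.
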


\begin{proof}
Let $u\in X$, $v\in\tX$ and $t\geq 0$.
Observe that
\begin{eqnarray*}
\J(tu+v)-\J(u)-\J'(u)\left(\frac{t^2-1}{2}u+tv\right)
&=& I'(u)\left(\frac{t^2-1}{2}u+tv\right)
  + I(u) - I(tu+v)\\
&=&-\frac{1}{2}\|v\|^2 +\int_{\R^N}\vp(t,x)\,dx
\end{eqnarray*}
where
$$\vp(t,x):=f(x,u)\Big(\frac{t^2-1}{2}u+tv\Big)
  + F(x,u) - F(x,tu+v).$$
We show that $\vp(t,x)\leq 0$ for $t\geq 0$ and $x\in\R^N$.
Suppose that $u(x)\neq 0$. Then by (F4) we have $\vp(0,x)\leq 0$ and by (F5) we get
$\vp(t,x)\to-\infty$ as $t\to\infty$. Let $t_0\geq 0$ be such that
$$\vp(t_0,x)=\max_{t\geq 0}\vp(t,x).$$
We may assume that $t_0>0$ and thus $\partial_t\vp(t_0,x)=0$. Therefore
$$f(x,u)(t_0u+v)=f(x,t_0u+v)u$$
and if $f(x,u)(t_0u+v)=f(x,t_0u+v)u\leq 0$ then by (F4)
$$\vp(t_0,x)\leq \frac{-t_0^2-1}{2}f(x,u)u
  + F(x,u) - F(x,t_0u+v)\leq 0.$$
Otherwise (F6) implies that
\begin{eqnarray*}
\vp(t_0,x)&\leq &f(x,u)\Big(\frac{t_0^2-1}{2}u+t_0v\Big)
  + \frac{(f(x,u)u)^2-(f(x,u)(t_0u+v))^2}{2f(x,u)u}\\
&=&-\frac{(f(x,u)v)^2}{2f(x,u)u}\leq 0.
\end{eqnarray*}
\end{proof}

\begin{Rem}\label{remF6} 
(a) The inspection of proof of Lemma \ref{LemB3check} shows that conditions (F4)-(F6) imply that
\begin{equation}\label{neqF6}
f(x,u)\Big(\frac{t^2-1}{2}u+tv\Big)
  + F(x,u) - F(x,tu+v)\leq 0\hbox{ for any }u,v\in\R, t\geq 0\hbox{ and a.e }x\in\R^N.
\end{equation}
On the other hand, \eqref{neqF6} implies (F4) and (F6). Indeed, take $v=0$, $t=0$, and next take $u=0$ in \eqref{neqF6}. Then we easily get (F4). Suppose that $f(x,u)\neq 0$. In order to see (F6), note that by replacing $v$ with $-tu+v$ in \eqref{neqF6} and taking $t=\frac{f(x,u)v}{f(x,u)u}>0$ we get
\begin{eqnarray*}
f(x,u)\Big(-\frac{t^2+1}{2}u+tv\Big)
  + F(x,u) - F(x,v)&=& \frac{(f(x,u)v)^2-(f(x,u)u)^2}{2f(x,u)u}+F(x,u) - F(x,v)\\
  &\leq &0.\
\end{eqnarray*} 
In particular, if $f(x,u)v=f(x,v)u>0$, then $f(x,u)\neq 0$ and by (F4), $f(x,u)u> 0$. Thus (F6) is satisfied. Therefore the collection of assumptions (F1)-(F6) is equivalent with the following one: (F1)-(F3), (F5) and \eqref{neqF6}.\\
(b) Observe that if $F,G:\R^N\times \R$ satisfies (F1)-(F6)
with $f=\partial_u F$, $g=\partial_u G$, then $\alpha F+\beta G$ satisfies (F1)-(F6) for any $\alpha,\beta>0$. Indeed, let $F$ and $G$ satisfy (F1)-(F6). It is not clear whether (F6) can be checked directly for $\alpha F+\beta G$. However, in view of (a), \eqref{neqF6} holds for $F$ and $G$, hence for $\alpha F+\beta G$ as well. Therefore  $\alpha F+\beta G$ satisfies (F1)-(F6) for any $\alpha,\beta>0$. In particular,
it is easy to see that
\begin{equation}\label{Ex1}
F(x,u)=\sum_{i=1}^m\frac{1}{p_i}|\Ga_i(x)u|^{p_i}
\end{equation}
satisfies (F1)-(F6), provided that $2<p_1\le p_2\le\dots\le p_m<2^*$, $\Ga_i(x)\in GL(K)$ for a.e.~$x\in\R^N$, and $\Ga_i,\Ga_i^{-1}\in L^\infty(\R^N,\R^{K\times K})$.\\
(c) Observe that, taking $t=1$ in \eqref{neqF6} we see that $F(x,\cdot)$ must be convex for a.e. $x\in \R^N$.
\end{Rem}

\begin{Lem}\label{LemCeramiBound1}
If (F4)-(F6) hold, $(u_n)$ is a Cerami sequence at level $c>0$, then $(u_n)$ is bounded.
\end{Lem}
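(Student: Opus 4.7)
I would argue by contradiction, assuming along a subsequence that $\|u_n\|\to\infty$ and setting $v_n:=u_n/\|u_n\|$, so that $\|v_n\|=1$. After extracting a further subsequence, $v_n\weakto v$ in $X$ and $v_n(x)\to v(x)$ a.e.\ on $\R^N$. Dividing the identity $\J(u_n)=\tfrac12\|u_n^+\|^2-\tfrac12\|\tilde u_n\|^2-\int_{\R^N}F(x,u_n)\,dx\to c$ by $\|u_n\|^2$ and combining $F\ge 0$ from (F4) with $\|v_n^+\|^2+\|\tilde v_n\|^2=1$ yields the \emph{a priori} bounds
\begin{equation*}
\|v_n^+\|^2\ \ge\ \tfrac12+o(1),\qquad \int_{\R^N}\frac{F(x,u_n)}{\|u_n\|^2}\,dx\ \le\ \tfrac12+o(1);
\end{equation*}
in particular $\|u_n^+\|\to\infty$. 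I would then split into two cases depending on the concentration behaviour of $v_n^+$.

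\textbf{Non-vanishing case.} Suppose there exist $y_n\in\Z^N$, $r>0$ and $\alpha>0$ with $\int_{B(y_n,r)}|v_n^+|^2\,dx\ge\alpha$. The $\Z^N$-periodicity of $V_i$ and $f_i$ implies that $\bar u_n(x):=u_n(x+y_n)$ is again a Cerami sequence at level $c$ with $\|\bar u_n\|=\|u_n\|$, and the spectral projections commute with integer translations, so $\bar v_n^+(x)=v_n^+(x+y_n)$. After extraction, $\bar v_n\weakto \bar v$ in $X$ and a.e., and the compact local Sobolev embedding gives $\bar v_n^+\to\bar v^+$ in $L^2_{\mathrm{loc}}$, whence $\int_{B(0,r)}|\bar v^+|^2\ge\alpha>0$; in particular $\bar v^+\neq 0$ and (since $X^+\cap\tX=\{0\}$) $\bar v\neq 0$ on a set of positive measure. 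On this set $|\bar u_n(x)|=\|\bar u_n\|\,|\bar v_n(x)|\to\infty$, so (F5) together with Fatou's lemma produce
$$\int_{\R^N}\frac{F(x,\bar u_n)}{\|\bar u_n\|^2}\,dx\ =\ \int_{\R^N}\frac{F(x,\bar u_n)}{|\bar u_n|^2}\,|\bar v_n|^2\,dx\ \longrightarrow\ +\infty,$$
contradicting the a priori bound.

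\textbf{Vanishing case.} Otherwise $\sup_{y\in\R^N}\int_{B(y,r)}|v_n^+|^2\,dx\to 0$, and Lions' concentration lemma implies $v_n^+\to 0$ in $L^q(\R^N)^K$ for every $q\in(2,2^*)$. For an arbitrary but fixed $R>0$ apply Lemma~\ref{LemB3check} to $u=u_n$ with $t=R/\|u_n\|$ and $v=-t\tilde u_n\in\tX$; then $tu_n+v=Rv_n^+$, while
$$\left\|\tfrac{t^2-1}{2}u_n+tv\right\|\ =\ \left\|\tfrac{t^2-1}{2}u_n-t^2\tilde u_n\right\|\ =\ O(1+\|u_n\|),$$
so the Cerami condition forces $\J'(u_n)\!\bigl(\tfrac{t^2-1}{2}u_n+tv\bigr)=o(1)$. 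Hence
$$\J(u_n)\ \ge\ \J(Rv_n^+)+o(1)\ =\ \tfrac{R^2}{2}\|v_n^+\|^2-\int_{\R^N}F(x,Rv_n^+)\,dx+o(1).$$
Estimating the integral by \eqref{eq:estimationOf_f} with small $\eps$ and using $\|v_n^+\|_2\le C$, $\|v_n^+\|_q\to 0$ and $\|v_n^+\|^2\ge 1/2+o(1)$ gives $\J(u_n)\ge R^2/8+o(1)$, which contradicts $\J(u_n)\to c$ once $R$ is taken large enough.

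The main delicate point is the bookkeeping in the vanishing case: the test direction in Lemma~\ref{LemB3check} must be chosen so that the argument of $\J'(u_n)$ grows at most linearly in $\|u_n\|$ (to be swallowed by the Cerami factor $(1+\|u_n\|)\|\J'(u_n)\|\to 0$) and, simultaneously, so that $tu_n+v$ is a bounded vector on which the $L^p$-vanishing supplied by Lions' lemma can be exploited; the scaling $t=R/\|u_n\|$ together with $v=-t\tilde u_n$ is exactly what achieves both requirements.
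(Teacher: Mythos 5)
Your proof is correct and follows essentially the same route as the paper: normalize $v_n=u_n/\|u_n\|$, run the vanishing/non-vanishing dichotomy via Lions' lemma, use Lemma~\ref{LemB3check} with the scaling $t=s/\|u_n\|$, $v=-t\tilde u_n$ (with the Cerami factor absorbing the correction term) in the vanishing case, and (F5) with Fatou's lemma after $\Z^N$-translations in the non-vanishing case. The only differences are cosmetic (fixing $\eps$ and then $R$ instead of letting $s\to\infty$, and phrasing the final contradiction through the bound on $\int_{\R^N}F(x,u_n)/\|u_n\|^2\,dx$ rather than $\J(u_n)/\|u_n\|^2\to-\infty$).
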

\begin{proof}
Suppose that $(u_n)$ is a Cerami sequence at level $c>0$ such that $\|u_n\|\to\infty$ as $n\to\infty$. Let $v_n:=\frac{u_n}{\|u_n\|}$. We may assume that $v_n\rightharpoonup v$ in $X$ and $v_n(x)\to v(x)$ a.e. in $\R^N$. Moreover there is a sequence $(y_n)_{n\in\N}\subset\R^N$ such that
\begin{equation}\label{EqLemLions}
\liminf_{n\to\infty}\int_{B(y_n,1)}|v_n^+|^2\,dx >0.
\end{equation}
Otherwise, in view of Lions lemma (see \cite{Willem}[Lemma 1.21]) we get that $v_n^+\to 0$ in $L^t(\R^N)^K$ for $2<t<2^*$. 
By \eqref{eq:estimationOf_f} we obtain
$\int_{\R^N}F(x,sv_n^+)\,dx\to 0$ for any $s>0$. Let us fix $s>0$. Observe that $\J'(u_n)(u_n)\to 0$ and $\J'(u_n)(u_n^-)\to 0$ as $n\to\infty$. Then by \eqref{eq:LemB3check} we get
\begin{equation}\label{EqIneq2}
c=\limsup_{n\to\infty}\J(u_n)\geq \limsup_{n\to\infty} \J(sv_n^+)= \frac{s^2}{2}\limsup_{n\to\infty}\|v_n^+\|^2.
\end{equation}
Note that
$$\frac{1}{2}(\|u_n^+\|^2-\|\tu_n\|^2)\geq \J(u_n)\geq \frac{c}{2}>0$$
for sufficiently large $n$. Hence
\begin{eqnarray*}
2\|u_n^+\|^2&\geq& \|u_n^+\|^2+\|\tu_n\|^2+c\geq\|u_n\|^2
\end{eqnarray*}
and, passing to a subsequence if necessary, $C_2:=\inf_{n\in\N}\|v_n^+\|^2>0$. Then
 by (\ref{EqIneq2})
$$c\geq \frac{s^2}{2}C_2$$
for any $s\geq0$ and the obtained contradiction shows that (\ref{EqLemLions}) holds. We may assume that $(y_n)\subset\Z^N$ and
$$\liminf_{n\to\infty}\int_{B(y_n,r)}|v_n^+|^2\,dx >0$$
for some $r>1$. Since $\J$ and $\mathcal{N}$ are invariant under translations of the form $u\mapsto u(\cdot-k)$, $k\in\Z^N$, then we may assume that $v_n^+\to v^+$ in $L^2_{loc}(\R^N)^K$ and $v^+\neq 0$. Note that if $v(x)\neq 0$ then $|u_n(x)|=|v_n(x)|\|u_n\|\to\infty$
and by (F5)
$$\frac{F(x,u_n(x))}{\|u_n\|^2}=\frac{F(x,u_n(x))}{|u_n(x)|^2}|v_n(x)|^2\to\infty$$
as $n\to\infty$.
Therefore by Fatou's lemma
\begin{eqnarray*}
\frac{\J(u_n)}{\|u_n\|^2}&=&\frac{1}{2}(\|v_n^+\|^2-\|\tv_n\|^2)
-\int_{\R^N}\frac{F(x,u_n(x))}{\|u_n\|^2}\,dx\\
&\to&-\infty.
\end{eqnarray*}
Thus we get a contradiction.
\end{proof}

\begin{altproof}{Theorem \ref{ThMain1}}
Observe that (F4) implies that $\cP=X\setminus\tX$.
In view of Theorem \ref{ThAbstract} there is a Cerami sequence $(u_n)$ at level $c>0$ and by Lemma \ref{LemCeramiBound1}, $(u_n)$ is bounded. Then passing to a subsequence we may assume that 
$u_n\rightharpoonup u$ in $X$ 
and there is a sequence $(y_n)\subset\R^N$ such that
\begin{equation}\label{EqLemLions1_1}
\liminf_{n\to\infty}\int_{B(y_n,1)}|u_n^+|^2\,dx >0.
\end{equation}
Otherwise, in view of Lions lemma, $u_n^+\to 0$ in $L^t(\R^N)^K$ for $2<t<2^*$. By \eqref{eq:estimationOf_f} 
we obtain
$$\|u_n^+\|^2=\J'(u_n)(u_n^+)+\int_{\R^N}f(x,u_n)u_n^+\, dx\to 0$$
as $n\to\infty$. Hence
$$0<c=\lim_{n\to\infty}\J(u_n)\leq\lim_{n\to\infty}\frac{1}{2}\|u_n^+\|^2=0$$
and we get a contradiction. Therefore (\ref{EqLemLions1_1}) holds 
and we may assume that there is a sequence $(y_n)\subset\Z^N$ such that
\begin{equation}\label{EqLemLions2_1}
\liminf_{n\to\infty}\int_{B(y_n,r)}|u_n^+|^2\, dx >0
\end{equation}
for some $r>1$. 
Since $\|u_n(\cdot + y_n)\|=\|u_n\|$, then there is $u\in X$ such that, up to a subsequence, $u_n(\cdot + y_n)\rightharpoonup u$ in $X$, $u_n(x+y_n)\to u(x)$ a.e. on $\R^N$ and $u_n^+(\cdot + y_n)\to u^+$ in $L^2_{loc}(\R^N)^K$.
By (\ref{EqLemLions2_1}) we get $u^+\neq 0$ and then $u\neq 0$.
Since $\J$ and $\mathcal{N}$ are invariant under translations of the form $u\mapsto u(\cdot+y)$, $y\in\Z^N$, then $\J'(u)=0$. Observe that 
by (F4)
$$\frac{1}{2} f(x,u_n(x+y_n))u_n(x+y_n)-
F(x,u_n(x+y_n))\geq 0.$$
Therefore, in view of Fatou's lemma
$$c=\lim_{n\to\infty}\J(u_n(\cdot+y_n)) =
 \lim_{n\to\infty}
\Big(\mathcal{J}(u_n(\cdot+y_n)) -
\frac{1}{2}\mathcal{J}'(u_n(\cdot+y_n))u_n(\cdot+y_n)\Big) 
\geq \J(u).$$
Since $u\in\cP=X\setminus\tX$, then by Theorem \ref{ThAbstract}
$$c=\inf_{\cN}\J=\J(u).$$
In view of Theorem \ref{ThExp},
$u$ is continuous and decays exponentially.
\end{altproof}

\begin{altproof}{\eqref{eq:minmax}}
Let $u\in X^+\setminus\{0\}$. Observe that the following map
$$\R^+\times\tX\ni (t,\tu)\mapsto -\J(tu^++\tu)\in\R$$
is weakly lower semicontinuous and coercive. Thus there is $t\geq 0$ and $\tu\in \tX$ such that 
$$\J(tu^++\tu)=\sup_{\R^+u\oplus\tX}\J.$$
Condition (A3) implies that $t>0$ and thus $tu^++\tu\in\cN$. Moreover $\J(tu^++\tu)\geq\inf_{\cN}\J$ and
\begin{equation}\label{eq:proofminmax}
\inf_{u\in X^+\setminus\{0\}}\sup_{\R^+u\oplus\tX}\J\geq\inf_{\cN}\J. 
\end{equation}
Since $\inf_{\cN}\J$ is attained by a ground state, we get the equality in \eqref{eq:proofminmax}.
\end{altproof}

\section{Continuity and decay of solutions}
\label{sect:DecayOfSolutions}

We present the results of this section in a general setting. Consider the system of nonlinear Schr\"odinger equations \eqref{eq} under the following assumptions:
\begin{itemize}
\item[(V1)] For any $i=1,2,...,K$, $V_i\in L^{\infty}_{loc}(\R^N)$, 
and
there is $V_0\in\R$ such that 
$$V_i(x)\geq-V_0\textnormal{ for any }x\in\R^N\textnormal{ and }i=1,2,...,K.$$
\item[(A1)] $f_i:\R^N\times\R^K\to \R$ is measurable, $\Z^N$-periodic in $x\in\R^N$ and continuous in $u\in\R^K$ for a.e. $x\in\R^N$. Moreover
there are $a>0$ and $2<p<2^*=\frac{2N}{(N-2)_+}$ such that
$$|f_i(x,u)|\leq a(1+|u|^{p-2})|u_i|\hbox{ for all }u \in\R^K,\; x\in\R^N
\textnormal{ and }i=1,2,...,K.$$
\end{itemize}

Similarly as Pankov \cite{PankovDecay} in case $K=1$, we say that $u=(u_1,u_2,...,u_K):\R^N\to\R^K$ is a {\em weak solution} to system (\ref{eq}) if for any $i=1,2,...,K$
$$u_i\in E_i:=\{v\in H^1(\R^N)|\; (V_i(x)+V_0+1)v\in L^2(\R^N)\}$$
and
$$\int_{\R^N}\nabla u_i\cdot \nabla\varphi +V_i(x)u_i\varphi-f_i(x,u)\varphi \,dx=0\textnormal{ for any }\varphi\in C^{\infty}_0(\R^N).$$

Observe that conditions (V) and (F1)-(F3) imply (V1) and (A1), $E_i=H^1(\R^N)$ and weak solutions coincide with the critical points of $\J$.

\begin{Lem}\label{PropPankov}
Suppose that (V1), (A1) hold and let $u=(u_1,u_2,..,u_K):\R^N\to\R^K$ be a weak solution to 
\eqref{eq}.
Then $u$ is continuous and
$\lim_{x\to\infty}u(x)=0.$
\end{Lem}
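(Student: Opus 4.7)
The plan is to establish a uniform local sup-norm bound on $|u|$ by a Moser iteration, and then read off both continuity and decay from it. I would first rewrite each equation in the coercive form
$$-\Delta u_i+(V_i(x)+V_0+1)u_i=f_i(x,u)+(V_0+1)u_i,$$
so that the potential on the left satisfies $V_i+V_0+1\ge 1$ regardless of any growth of $V_i$ at infinity permitted by~(V1). Testing the equation against the truncated function $u_i|u_i|^{2(s-1)}\phi^2$ for $s\ge 1$ and a smooth cutoff $\phi$ supported in $B(x_0,r_1)$, and applying Young's inequality to the cross term involving $\nabla\phi$, one arrives at an inequality of the type
$$\int\bigl|\nabla(|u_i|^s\phi)\bigr|^2\,dx\le Cs\!\left[\int|u_i|^{2s}\phi^2\,dx+\int|u|^{p-2}|u_i|^{2s}\phi^2\,dx+\int|u_i|^{2s}|\nabla\phi|^2\,dx\right],$$
where $C$ depends only on $a,p,V_0$. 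The positive part of $V_i$ is simply discarded from the left, since it has the favorable sign.

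Next I would absorb the middle term. Because $p<2^*$, we have $|u|^{p-2}\in L^{2^*/(p-2)}(\R^N)$ with $2^*/(p-2)>N/2$, so H\"older combined with the Sobolev inequality $\|w\|_{L^{2^*}}\le C\|\nabla w\|_{L^2}$ applied to $w=|u_i|^s\phi$, followed by interpolation between $L^2$ and $L^{2^*}$, converts $\int|u|^{p-2}|u_i|^{2s}\phi^2$ into a small multiple of the $L^{2^*}$-norm of $|u_i|^s\phi$ (re-absorbed into the left-hand side after Sobolev) plus a bounded multiple of $\bigl\||u_i|^s\phi\bigr\|_{L^2}^2$. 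Iterating $s\mapsto 2^*s/2$ in the standard Moser fashion from $s=1$ and shrinking cutoffs geometrically between $r_1=1$ and $r_2=1/2$ yields the key estimate
$$\|u_i\|_{L^\infty(B(x_0,1/2))}\le C\|u_i\|_{L^2(B(x_0,1))},\qquad x_0\in\R^N,$$
with a constant $C$ depending only on $a,p,V_0$ and $\|u\|_{L^{2^*}(\R^N)}$, hence uniform in~$x_0$.

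Both conclusions now follow quickly. Since $u_i\in H^1(\R^N)\subset L^2(\R^N)$, we have $\|u_i\|_{L^2(B(x_0,1))}\to 0$ as $|x_0|\to\infty$, so the Moser estimate above gives $u(x)\to 0$ as $|x|\to\infty$. For continuity, once $u\in L^\infty_{\rm loc}$ we see that $f_i(x,u)\in L^\infty_{\rm loc}$ by (A1) and $V_iu_i\in L^\infty_{\rm loc}$ by (V1), so $-\Delta u_i\in L^\infty_{\rm loc}$; standard Calder\'on--Zygmund theory then gives $u_i\in W^{2,q}_{\rm loc}(\R^N)$ for every $q<\infty$, and the Sobolev embedding $W^{2,q}\hookrightarrow C^{0,\alpha}$ for $q>N/2$ supplies the continuity.

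The main obstacle is the uniformity of the Moser constant in $x_0$, given that (V1) permits $V_i$ to be unbounded at infinity. The decisive point is that only the lower bound $V_i\ge-V_0$ enters the iteration --- the positive part of $V_i$ lies on the correct side of the inequality and can simply be dropped --- so the resulting constants depend only on the structural quantities $a,p,V_0,\|u\|_{L^{2^*}(\R^N)}$ rather than on any local norm of $V_i$ itself. The technical care needed for this uniformity, together with the initial truncation needed to make $u_i|u_i|^{2(s-1)}\phi^2$ a legitimate test function in $H^1_0$, constitutes the only nontrivial bookkeeping of the argument.
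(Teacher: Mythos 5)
Your proof is essentially correct, but it takes a genuinely different route from the paper. The paper does not use Moser iteration: it views $H_i=-\Delta+V_i+V_0+1$ as a positive Schr\"odinger operator with (locally) Kato-class potential, splits $(V_0+1)u_i+f_i(x,u)$ into a bounded part and an $L^s$ part, and bootstraps integrability of $u_i=H_i^{-1}(h_i^0+h_i^1)$ via the $L^s\to L^q$ mapping bounds of \cite{Simon} (Theorem B.2.1) until $u_i\in L^\infty(\R^N)$ globally; it then rewrites the equation as $(-\Delta+V_i+W_i)u_i=0$ with the bounded potential $W_i=-f_i(x,u)/u_i$ and quotes \cite{Simon} (Theorems C.1.1 and C.3.1) for continuity and decay at infinity. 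Your De Giorgi--Nash--Moser argument replaces this semigroup machinery by the elementary local estimate $\|u_i\|_{L^\infty(B(x_0,1/2))}\le C\|u_i\|_{L^2(B(x_0,1))}$, and your key uniformity claim is sound: only the lower bound $V_i\ge-V_0$ enters, and the coefficient $|u|^{p-2}$ lies in $L^{2^*/(p-2)}$ with $2^*/(p-2)>N/2$, so its $L^{2^*/(p-2)}(B(x_0,1))$-norm is dominated by the global quantity $\|u\|_{L^{2^*}(\R^N)}^{p-2}$ independently of $x_0$ (this is exactly the setting of \cite{GilbargTrudinger}, Theorem 8.17); decay then follows from $u\in L^2(\R^N)$ and continuity from interior $W^{2,q}$ regularity and Morrey's embedding. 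What each approach buys: yours is self-contained, quantitative, and uses only classical elliptic theory, whereas the paper's is shorter to write, yields a global $L^\infty$ bound at once, and introduces the operator $-\Delta+V_i+W_i$ that is reused in Theorem \ref{ThExp} for the exponential decay. Two small repairs: your argument as written presumes $N\ge3$ (finiteness of $2^*$), so for $N=1$ use $H^1(\R)\hookrightarrow C_0(\R)$ and for $N=2$ run the same iteration with an arbitrary finite Sobolev exponent, since $u\in L^q(\R^2)$ for all $q<\infty$ and $|u|^{p-2}$ then lies in some $L^t$ with $t>1=N/2$ (the paper also treats these cases separately); and the factor in your Caccioppoli-type inequality should be of order $s^2$ rather than $s$, which is immaterial for the convergence of the iteration.
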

\begin{proof}
Let us assume that $N\geq 3$.
Arguing similarly as in the proof of \cite{PankovDecay}[Lemma 1], for $i=1,2,...,K$, $u_i\in E_i\subset L^{2^*}(\R^N)$, operator $H_i=(-\Delta+V_i(x)+V_0+1)$ is positive definite and satisfies the assumptions of the Sobolev estimate theorem \cite{Simon}[Theorem B.2.1].
Since $V_i(x)+V_0+1\in K_N^{loc}$, $V_i(x)+V_0+1>0$ and 
$0<\inf \sigma(H_i)$,  then due to \cite{Simon}[Theorem B.2.1], $H_i^{-1}$ is bounded from $L^s(\R^N)$ to $L^q(\R^N)$ provided that
$$\frac{1}{s}-\frac{1}{q}<\frac{2}{N}$$
and $s\leq q$.
Let $u\in (L^r(\R^N))^K$, where $r\geq 2^*$,
$$A:=\{x\in\R^N|\; |u(x)|\geq 1\}.$$
Note that $A$ has finite measure and
$$h_i^0(x):=\chi_{\R^N\setminus A}(x)((V_0+1)u_i(x)+f_i(x,u(x)))
\in L^{\infty}(\R^N)$$
$$h_i^1(x):=\chi_{A}(x)((V_0+1)u_i(x)+f_i(x,u(x)))
\in L^{s}(\R^N),$$
where $s=\frac{r}{p-1}$.
Hence 
$$u_i=H^{-1}_i(h_i^0+h_i^1)=
H^{-1}_i(h_i^0)+H^{-1}_i(h_i^1)\in L^{\infty}(\R^N)+L^{q}(\R^N)$$
for any
$i=1,2,...,K$.
Observe that if $\frac{1}{s}<\frac{2}{N}$, then we can take $q=\infty$ and $u_i\in L^{\infty}(\R^N)$, otherwise 
$$q<r\frac{1}{(p-1)-\frac{2r}{N}}$$
and we can take any $q=r\frac{1}{1-\eps}$ with $0<\eps<2^*-p$. Starting with $r=2^*$ we continue the above procedure and we obtain
$u_i\in L^{\infty}(\R^N)+L^{s}(\R^N)$, where $s=2^*\big(\frac{1}{1-\eps}\big)^l$ for $l\geq 1$. For sufficiently large $l$,
$\frac{1}{s}<\frac{2}{N}$ and we may take $q=\infty$. Thus 
$u_i\in L^{\infty}(\R^N)$ for any $i=1,2,...,K$.
Now let us define
\begin{equation}\label{DefOfW}
W_i(x)=\left\{
\begin{array}{ll}
    -\frac{f_i(x,u(x))}{u_i(x)},
    &
    \hbox{ if } u_i(x)\neq 0,\\
    0,
    &
    \hbox{ if } u_i(x)= 0
\end{array}
\right.
\end{equation}
for $i=1,2,...,K$.
Since $u_i\in L^{\infty}(\R^N)$, then $W_i\in L^{\infty}(\R^N)$ and
$(-\Delta +V_i+W_i)u_i=0$. In view of \cite{Simon}[Theorem C.1.1] we get the continuity of $u_i$ and by \cite{Simon}[Theorem C.3.1], $u_i(x)\to 0$ as $x\to\infty$ for $i=1,2,...,K$. If $N=1$, then any function from $H^1(\R)$ is continuous and decays at infinity. If $N=2$, then due to Sobolev embedding theorems we obtain that $u_i\in L^{\infty}(\R^N)$, and similarly as above we show that $u_i$ is continuous and decays at infinity.
\end{proof}

Now let us consider the following assumptions:
\begin{itemize}
\item[(V2)] For any $i=1,2,...,K$, the essential spectrum $\sigma_{ess}(-\Delta+V_i)$ of the operator $-\Delta+V_i$ in $L^2(\R^N)$ does not contain the point $0$.
\item[(A2)] For any $i=1,2,...,K$
$$\lim_{|u|\to 0} \essup\frac{|f_i(x,u)|}{|u_i|}=0.$$
\end{itemize}

If $V_i\in L^{\infty}(\R^N)$ is periodic, then by \cite{ReedSimon} we have
$$\sigma_{ess}(-\Delta+V_i)=\sigma(-\Delta+V_i)$$
and (V) implies (V2) and (F3) implies (A2).

\begin{Th}\label{ThExp}
Suppose that (V1), (V2), (A1), (A2) hold and $u=(u_1,u_2,...,u_K)$ is a weak solution to \eqref{eq}.
Then $u$ is continuous and there exist $\alpha>0$ and $C>0$ such that
\begin{equation}\label{estimateexp}
|u(x)|\leq C \exp(-\alpha|x|).
\end{equation}
\end{Th}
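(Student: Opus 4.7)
The plan is to reduce each component equation in \eqref{eq} to a linear Schr\"odinger eigenvalue problem $(-\Delta+V_i+W_i)u_i=0$ with a bounded perturbation $W_i$ that vanishes at infinity, and then invoke a classical exponential decay estimate for eigenfunctions whose eigenvalue is separated from the essential spectrum. First I would apply Lemma \ref{PropPankov}, which under (V1) and (A1) already yields that $u=(u_1,\ldots,u_K)$ is continuous, that $u(x)\to 0$ as $|x|\to\infty$, and that each $u_i\in L^\infty(\R^N)\cap L^2(\R^N)$.

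Next, I would define $W_i$ by \eqref{DefOfW}. Since $u(x)\to 0$ at infinity, hypothesis (A2) forces $W_i(x)\to 0$ as $|x|\to\infty$, while the pointwise bound in (A1) combined with $u_i\in L^\infty$ guarantees $W_i\in L^\infty(\R^N)$. The equation for $u_i$ then reads
\[
(-\Delta+V_i+W_i)u_i=0,
\]
so $0$ is an $L^2$-eigenvalue of the Schr\"odinger operator $H_i:=-\Delta+V_i+W_i$ with eigenfunction $u_i$. Because $W_i\in L^\infty(\R^N)$ and $W_i(x)\to 0$, the multiplication operator $W_i(-\Delta+V_i-z)^{-1}$ is compact for any $z$ in the resolvent set of $-\Delta+V_i$, so Weyl's theorem on the stability of the essential spectrum yields $\sigma_{ess}(H_i)=\sigma_{ess}(-\Delta+V_i)$. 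Hypothesis (V2) then forces $0\notin\sigma_{ess}(H_i)$, and I set $d:=\min_i \mathrm{dist}(0,\sigma_{ess}(H_i))>0$.

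Finally, I would appeal to the standard exponential decay estimate for eigenfunctions of a Schr\"odinger operator whose eigenvalue is isolated from the essential spectrum, obtained via a Combes--Thomas/Agmon-type argument applied to the resolvent of $H_i$ at energy $0$ (cf.\ \cite{Simon} and \cite{PankovDecay}): for any fixed $0<\alpha<\sqrt{d}$ there exists $C_i>0$ such that $|u_i(x)|\le C_i\,e^{-\alpha|x|}$. Taking $C$ to be the maximum of the $C_i$ and summing the componentwise estimates gives \eqref{estimateexp}. The main obstacle is that $0$ may lie in a \emph{gap} of the essential spectrum of $-\Delta+V_i$ rather than below its bottom, so the classical Agmon geodesic-distance argument does not apply verbatim; however, the Combes--Thomas approach, which only requires that the resolvent $(H_i-z)^{-1}$ extends analytically into a complex-energy strip around $0$ of width controlled by $d$, delivers exponential decay in any spectral gap and is the appropriate linear tool, once all the nonlinear information has been absorbed into the bounded, vanishing potential $W_i$.
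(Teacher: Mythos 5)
Your proposal is correct and follows essentially the same route as the paper: continuity and decay of $u$ via Lemma \ref{PropPankov}, reduction to $(-\Delta+V_i+W_i)u_i=0$ with $W_i\in L^\infty(\R^N)$ vanishing at infinity, stability of the essential spectrum under this relatively compact perturbation, and then exponential decay of an eigenfunction whose eigenvalue $0$ is isolated from $\sigma_{ess}$. The paper simply quotes \cite{Simon}[Theorem C.3.4] for that last step (and justifies compactness of $W_i(-\Delta+V_i+\lambda)^{-1}$ through the resolvent identity you implicitly use), which is the same Combes--Thomas/Agmon-in-a-gap mechanism you invoke.
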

\begin{proof}
Let us fix $1\leq i\leq K$. Consider $W_i$ given by \eqref{DefOfW}
and in view of Lemma \ref{PropPankov} and (A2) we get 
$$\lim_{|x|\to\infty} \essup W_i(x)=0.$$
Since $W_i\in L^{\infty}(\R^N)$, then
$W_i$ is relatively compact with respect to $-\Delta$ (see section XIII.4 in \cite{ReedSimon}). Thus $W_i(-\Delta +1)^{-1}$ is a compact operator.
Note that $V_i$ is bounded from below and there is $\lambda>1$ such that
$-\lambda\notin\sigma(-\Delta+V_i)$.
Then $(-\Delta+V_i+\lambda)^{-1}$ is bounded and
$$(-\Delta + V_i+ \lambda)^{-1}=(-\Delta + 1)^{-1}-(-\Delta + 1)^{-1}(V_i+\lambda-1)(-\Delta + V_i+ \lambda)^{-1},$$
hence $W_i(-\Delta+V_i+\lambda)^{-1}$ is compact.
Therefore $W_i$ is relatively compact with respect to $-\Delta + V_i$ (see section XIII.4 in \cite{ReedSimon}) and
$$\sigma_{ess}(-\Delta + V_i)=\sigma_{ess}(-\Delta + V_i+W_i).$$
Since $(-\Delta + V_i+W_i)u_i=0$, then $0$ is an isolated eigenvalue of finite multiplicity of the operator 
$-\Delta + V_i+W_i$ and in view of \cite{Simon}[Theorem C.3.4], the eigenfunction $u_i$ satisfies 
$$|u_i(x)|\leq C_i \exp(-\alpha|x|)$$
for some $\alpha>0$ and $C_i>0$. Thus \eqref{estimateexp} holds.
\end{proof}

\begin{Rem}
In Theorem \ref{ThExp}, if $\Sigma=\inf_{i=1,2,...,K}\inf \sigma_{ess}(-\Delta+V_i)>0$, then
due to \cite{Simon}[Theorem C.3.5] for any $\alpha<\sqrt{2\Sigma}$ there is $C>0$ such that \eqref{estimateexp} holds.
\end{Rem}

{\bf Acknowledgements.} The author was partially supported by National Science Centre, Poland (Grant No. 2014/15/D/ST1/03638) and he would like to thank the referee for many valuable comments. The author is also indebted to Andrzej Szulkin and Wojciech Kryszewski for their remarks helping to improve the paper.

\parbox{9cm}{
\noindent \\{\sc Address of the author:}\\
Jaros\l aw Mederski\\
 Nicolaus Copernicus University \\
 Faculty of Mathematics and Computer Science\\
 ul. Chopina 12/18, 87-100 Toru\'n, Poland\\
 jmederski@mat.umk.pl\\
 }
 
\end{document}